\numberwithin{equation}{section}
\newtheorem{theorem}{Theorem}[section]
\newtheorem{lemma}[theorem]{Lemma}
\newtheorem{proposition}[theorem]{Proposition}
\newtheorem{maintheorem}{Main Theorem}
\theoremstyle{definition}
\theoremstyle{remark}
\newtheorem{remark}[theorem]{Remark}
\newtheorem{acknowledgement}{Acknowledgement}
\newcommand{\Ass}{\operatorname{Ass}}
\newcommand{\Spec}{\operatorname{Spec}}
\newcommand{\Ht}{\operatorname{ht}}
\newcommand{\depth}{\operatorname{depth}}
\newcommand{\Proj}{\operatorname{Proj}}
\newcommand{\Cl}{\operatorname{Cl}}
\newcommand{\fm}{\frak{m}}
\newcommand{\fp}{\frak{p}}
\begin{document}
\title[Hyperplane sections of non-standard graded rings]
{Hyperplane sections of non-standard graded rings}

\author[K. Shimomoto]{Kazuma Shimomoto}
\address{Department of Mathematics, Tokyo Institute of Technology, 2-12-1 Ookayama, Meguro, Tokyo 152-8551, Japan}
\email{shimomotokazuma@gmail.com}

\thanks{2020 {\em Mathematics Subject Classification\/}: 13A02, 13B22, 13C20, 14C20}

\keywords{Bertini theorem, normal ring, non-standard graded ring}

%\subjclass{13}
%\subjclass[2000]{Primary 13-XX}
%\subjclass[2000]{Primary ; Secondary}
%\date{\today \, (\printtime)}
%\date{\today}

\begin{abstract}
The purpose of this paper is to explain a method on the generalization of the Bertini-type theorem on standard graded rings to the non-standard graded case of certain type.
\end{abstract}

\maketitle 

\tableofcontents

\section{Introduction}

In a previous paper \cite{HS18}, we studied the Bertini theorem for normal hyperplane sections in mixed characteristic. In this article, we prove that a Noetherian graded normal domain with a unique graded maximal ideal defined over a field of characteristic zero has sufficiently many normal specializations under a mild condition. More precisely, we have the following theorem, which generalizes the classical Bertini theorem due to Flenner \cite{Fl77} to the case of non-standard graded rings.

\begin{maintheorem}
\label{maintheoremA}
Let $R=\bigoplus_{n \ge 0} R_n$ be a Noetherian graded normal domain with the unique graded maximal ideal $\fm=\bigoplus_{n>0} R_n$. Assume that $R_0=K$ is an algebraically closed field, the depth of $R_{\fm}$ is at least $3$, and let $x_1,\ldots,x_n \in R_1$ be a set of homogeneous elements in degree $1$. Set
$$
\mathbf{x}_{\alpha}:=\sum_{i=1}^n \alpha_i x_i,
$$
where $\alpha=(\alpha_1,\ldots,\alpha_n) \in K^n$. Then the following assertions hold:
\begin{enumerate}
\item
Assume that $R=K[R_1]$, the elements $x_1,\ldots,x_n$ form a $K$-basis of $R_1$, and the characteristic of $K$ is arbitrary. Then there exists a Zariski-dense open subset $\mathcal{U} \subset K^n$ such that $R/\mathbf{x}_{\alpha}R$ is a graded normal domain for every $\alpha \in \mathcal{U}$.

\item
Assume that the composite map $K[x_1,\ldots,x_n] \hookrightarrow K[R_1] \hookrightarrow R$ is module-finite, $x_1,\ldots,x_n$ form a homogeneous system of parameters for $R$, and the characteristic of $K$ is zero. Then there exists a Zariski-dense open subset $\mathcal{U} \subset K^n$ such that $R/\mathbf{x}_{\alpha}R$ is a graded normal domain for every $\alpha \in \mathcal{U}$.
\end{enumerate}
\end{maintheorem}

In this paper, we offer two different approaches to the proof of the above theorem. The first proof is based on Flenner's local Bertini theorem \cite{Fl77} that applies to $(1)$, while the second one applies to both $(1)$ and $(2)$ of the theorem, based on the ideas of the second Bertini theorem by Cumino, Greco, and Manaresi \cite{CGM86} using projective geometry. The reason for the inclusion of both proofs is that the first one is of algebraic nature and the second one is of algebro-geometric nature. Just as in the condition $(2)$ of Main Theorem \ref{maintheoremA}, a graded ring $R=\bigoplus_{n \ge 0} R_n$ with $R_0=K$ a field is called \textit{semi-standard} if $R$ is module-finite over $K[R_1]$. Graded rings of this type appear in the literature and often arise in the context of Ehrhart ring associated to a lattice polytope (see \cite{HiYan18} and \cite{KY20} for some recent developments).

As an application, we prove a Grothendieck-Lefschetz style result for graded normal domains that are not necessarily assumed to be standard graded.

\begin{maintheorem}
\label{ClassGroup}
Let $R=\bigoplus_{n \ge 0} R_n$ be a Noetherian graded normal domain defined over an algebraically closed field $R_0=K$ of characteristic 0 with the unique graded maximal ideal $\fm=\bigoplus_{n>0}R_n$ and $\dim R \ge 4$. Assume that $x_1,\ldots,x_n \in R_1$ satisfy the condition as in Main Theorem \ref{maintheoremA}(1) or (2), and the depth of $R_\fm$ is at least $3$.

Then there exists a Zariski-dense open subset $\mathcal{U} \subset K^n$ such that $R/\mathbf{x}_{\alpha}R$ is a graded normal domain for every $\alpha \in \mathcal{U}$ and the restriction map between divisor class groups
$$
\Cl(R) \to \Cl(R/\mathbf{x}_{\alpha}R)
$$
is injective.
\end{maintheorem}

The next theorem refines the classical result on normal graded rings over a field as a section ring of some ample $\mathbb{Q}$-divisor in the case of semi-standard graded rings (see \cite{De79}).

\begin{maintheorem}
\label{Non-StandardDemazure}
Let $R=\bigoplus_{n \ge 0} R_n$ be a Noetherian graded normal domain such that $R_0=K$ is a field and $\dim R \ge 2$. Suppose that the natural map $K[R_1] \hookrightarrow R$ is module-finite. Then there exists an ample Cartier divisor $D$ on $X=\Proj(R)$ such that
$$
R \cong \bigoplus_{n \ge 0} H^0(X,\mathcal{O}_X(nD))
$$
as graded rings.
\end{maintheorem}

\section{Bertini theorem for graded rings}

We begin with the following lemma.

\begin{lemma}
\label{PID}
Let $A$ be a principal ideal domain with its field of fractions $K$. Let $A \subset B \subset K$ be a subring. Then $B$ is the localization of $A$ with respect to some multiplicative subset of $A \setminus \{0\}$.
\end{lemma}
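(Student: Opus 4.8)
The plan is to identify the multiplicative set directly. Let $S = \{a \in A \setminus \{0\} : 1/a \in B\}$. First I would check that $S$ is multiplicatively closed: it contains $1$, and if $1/a, 1/b \in B$ then $1/(ab) = (1/a)(1/b) \in B$ since $B$ is a ring. It is also clear that $A[S^{-1}] \subseteq B$, because $A \subseteq B$ and $S^{-1} \subseteq B$. So the only real content is the reverse inclusion $B \subseteq A[S^{-1}]$.

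For that, take an arbitrary $\beta \in B \subseteq K$ and write $\beta = a/b$ with $a, b \in A$. Since $A$ is a PID (in particular a UFD and a GCD domain), I may assume $a$ and $b$ are coprime, i.e.\ $\gcd(a,b) = 1$. Then there exist $u, v \in A$ with $ua + vb = 1$. The key step is to show $1/b \in B$, which would give $\beta \in A[S^{-1}]$ since then $b \in S$. Dividing the B\'ezout relation by $b$ gives $1/b = u(a/b) + v = u\beta + v$, and the right-hand side lies in $B$ because $u, v \in A \subseteq B$ and $\beta \in B$. Hence $1/b \in B$, so $b \in S$ and $\beta = a \cdot (1/b) \in A[S^{-1}]$. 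This proves $B = A[S^{-1}]$.

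The argument is short and the main (mild) obstacle is simply being careful that the coprime representation and the B\'ezout identity are available, which they are precisely because $A$ is a principal ideal domain; no genuine difficulty arises. One could also phrase $S$ more symmetrically as the set of denominators appearing in some representation of an element of $B$, but the definition above via $\{a : 1/a \in B\}$ is cleanest and makes both inclusions transparent.
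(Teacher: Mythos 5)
Your proof is correct and follows essentially the same route as the paper's: both hinge on the observation that if $a/b \in B$ with $\gcd(a,b)=1$, then the B\'ezout relation $ua+vb=1$ forces $1/b \in B$. Your packaging via $S=\{a \in A\setminus\{0\} : 1/a \in B\}$ is a slightly cleaner way to organize the same argument than the paper's choice of a generating set $\Sigma'$ for $B$ over $A$.
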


\begin{proof}
Let us choose nonzero elements $a, b \in A$. Then since $A$ is a principal ideal domain, there exists $d \in A$ such that $(a,b)=(d)$. Then we have $d|a$ and $d|a$, which implies that $d$ is the greatest common divisor of $a$ and $b$.

Now consider $ab^{-1} \in B$ for $a,b \in A$, where we assume that $a$ and $b$ are relatively prime. Then by the above fact, we find that $sa+tb=1$ for $s,t \in A$. Then
$$
sab^{-1}+t=b^{-1}
$$
from which we have $b^{-1} \in B$. Now let us write $B=A[\Sigma']$ for some subset $\Sigma' \subset K$. Let $\Sigma$ be the multiplicative subset of $A \setminus \{0\}$ which is generated by all elements $b \in A$ for which $ab^{-1} \in \Sigma'$ for some $a \in A$. We conclude that $A[\Sigma'] \subset A[\{b^{-1}\}_{b \in \Sigma}] \subset A[\Sigma']$ and thus, $B=A[\{b^{-1}\}_{b \in \Sigma}]$. This shows that $B$ is the localization of $A$ with respect to $\Sigma$, as required.
\end{proof}

We need the following result of Matijevic-Roberts type. As we are unable to find a reference for it, we give a proof.

\begin{proposition}
\label{localization}
Assume that $R=\bigoplus_{n \in \mathbb{Z}} R_n$ is a $\mathbb{Z}$-graded Noetherian ring. Then $R$ is a normal ring if and only if $R_{\fm}$ is an integrally closed domain for any graded maximal ideal $\fm$ of $R$.
\end{proposition}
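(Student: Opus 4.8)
The plan is to reduce the global normality statement to a statement about localizations at graded primes, using Serre's criterion $(R_1)+(S_2)$ together with the well-known fact that for a $\mathbb{Z}$-graded Noetherian ring the properties $(R_i)$ and $(S_i)$ can be checked at graded primes only. First I would recall that $R$ is normal if and only if it is a finite product of normal domains, which in turn is equivalent to $R$ being reduced and satisfying $(R_1)$ and $(S_2)$; so the whole question decomposes into checking that each of ``reduced'', $(R_1)$, $(S_2)$ is detected by the family $\{R_\fm : \fm \text{ graded maximal}\}$. For the forward direction there is nothing to do: localization of a normal ring is normal, and a localization of a domain is a domain, so if $R$ is normal then every $R_\fm$ is an integrally closed domain.

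For the converse, the key point is the graded analogue of the Matijevic--Roberts principle: if $\fp$ is an arbitrary prime of $R$, let $\fp^* = \bigoplus_{n}(\fp \cap R_n)$ be the largest graded ideal contained in $\fp$; then $\fp^*$ is prime, and there is a faithfully flat (indeed, essentially a polynomial/Laurent extension after inverting a suitable homogeneous element) local homomorphism relating $R_{\fp^*}$ and $R_\fp$, so that $R_\fp$ inherits $(R_i)$ and $(S_i)$ from $R_{\fp^*}$. Concretely, one shows $(R_{\fp})$ is a localization of $(R_{\fp^*})[t,t^{-1}]$ or a regular quotient thereof, and regularity, the Serre conditions, and reducedness all ascend along such maps. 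Granting this, every graded maximal ideal $\fm$ with $R_\fm$ an integrally closed domain forces $R_\fm$ to be reduced, $(R_1)$ and $(S_2)$; then for an arbitrary prime $\fp$, choosing a graded maximal ideal $\fm \supseteq \fp^*$ we get these properties at $R_{\fp^*}$ by localizing further, and then at $R_\fp$ by the Matijevic--Roberts comparison. Hence $R$ is reduced and satisfies $(R_1)$, $(S_2)$ globally, so $R$ is normal.

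The one subtlety to handle with care is that ``integrally closed domain'' at each graded maximal ideal gives us connectedness information too, and I would use this to conclude $R$ is actually a domain (not merely a finite product of domains) when $R$ has a \emph{unique} graded maximal ideal, although for the bare statement as written only normality is asserted, so the product decomposition is acceptable and I need not insist on irreducibility. I expect the main obstacle to be setting up the comparison between $R_\fp$ and $R_{\fp^*}$ cleanly: one must treat the case $\fp = \fp^*$ (nothing to prove) separately from $\fp \supsetneq \fp^*$, and in the latter case exhibit $R_\fp$ as a localization of $(R_{\fp^*})[t,t^{-1}]$, which requires choosing a homogeneous element of positive degree in $\fp \setminus \fp^*$ and checking it becomes a unit or a polynomial variable after the appropriate localization. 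Once that structural lemma is in place, the ascent of normality (equivalently of $(R_1)$, $(S_2)$, reducedness) along the faithfully flat map with regular — in fact Laurent-polynomial — fibers is standard.
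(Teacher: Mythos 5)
Your overall strategy---reduce normality to $(R_1)$ and $(S_2)$ via Serre's criterion, check these only at graded primes through the assignment $\fp \mapsto \fp^*$, and transfer them from $R_{\fp^*}$ to $R_\fp$ using the graded structure of homogeneous localizations---is essentially the paper's strategy; the paper itself labels its proposition a ``result of Matijevic--Roberts type.'' The forward direction and the $(S_2)$ part go through as you describe (the paper gets $(S_2)$ from the formulas $\Ht\fp=\Ht\fp^*+1$ and $\depth R_\fp=\depth R_{\fp^*}+1$ of Bruns--Herzog), and reducedness is automatic from $(R_1)+(S_2)$ once those are established.

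However, the structural comparison on which you rest everything is misstated in two ways, and as written it fails exactly in the crucial case. First, $\fp\setminus\fp^*$ contains \emph{no} homogeneous elements: $\fp^*$ is by definition generated by all homogeneous elements of $\fp$, so the element $t$ you need is a homogeneous element of nonzero degree \emph{outside} $\fp$. (Such an element exists whenever $\fp\neq\fp^*$: otherwise $\bigoplus_{n\neq 0}R_n\subseteq\fp^*$, so $R/\fp^*$ is concentrated in degree zero, every ideal of it is graded, and $\fp$ would be graded.) Second, $R_\fp$ is not in general a localization of $R_{\fp^*}[t,t^{-1}]$: when $\Ht\fp=1$ and $\fp$ is not graded in a graded domain, one has $\fp^*=(0)$, so $R_{\fp^*}$ is the full fraction field $\mathcal{K}$, while $R_\fp$ is a one-dimensional local subring of $\mathcal{K}$ and cannot be a localization of $\mathcal{K}[t,t^{-1}]$. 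The correct intermediate object is the \emph{homogeneous} localization $R_{(\fp^*)}$ (invert only homogeneous elements outside $\fp^*$), which is a Laurent extension $A[t,t^{-1}]$ of its degree-zero part $A$; both $R_{\fp^*}$ and $R_\fp$ are localizations of this ring, and the transfer runs: $R_{\fp^*}$ normal $\Rightarrow$ $A_\fq$ normal (flat descent) $\Rightarrow$ $R_\fp$ normal (flat ascent with regular fibers). The paper sidesteps this descent step in the $(R_1)$ case by an elementary device: for $\Ht\fp=1$ with $\fp$ not graded it notes $R_{(0)}=K[t,t^{-1}]$ for a field $K$, hence $K[t]\subset R_\fp\subset\mathcal{K}$ with $\mathcal{K}$ also the fraction field of $K[t]$, and Lemma~\ref{PID} then forces $R_\fp$ to be a localization of the principal ideal domain $K[t]$, hence regular of dimension one. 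If you repair your structural lemma along these lines (or cite the Matijevic--Roberts transfer for $(R_1)$ and $(S_2)$ from the literature), your argument goes through.
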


\begin{proof}
Let $\fp$ be a prime ideal of $R$. Let $\fp^*$ denote the ideal generated by homogeneous elements contained in $\fp$. Then $\fp^*$ is a prime ideal by  \cite[Lemma 1.5.6]{BrHer93}. Let $\fm \subset R$ be a graded maximal ideal. Since normality is a local condition, we may replace $R$ with the localization $S^{-1}R$, where $S$ is the set of all homogeneous elements of $R$ not contained in $\fm$, and $S^{-1}\fm$ is its unique graded maximal ideal. Thus, assume that $R$ is a $\mathbb{Z}$-graded Noetherian domain with a unique graded maximal ideal $\fm$. Note that $R_{\fp^*}$ is the localization of $R_{\fm}$ which is an integrally closed domain by assumption. To prove that $R$ is normal, it is sufficient to check Serre's $(S_2)$ and $(R_1)$. 

First, we prove that $R$ is an integral domain. Since associated prime ideals of $R$ are graded by \cite[Lemma 1.5.6]{BrHer93}, they are contained in $\fm$. This shows that the natural map $R \to R_\fm$ is injective. Thus, $R$ is a domain. Next, we check $(S_2)$. Take a prime ideal $\fp$ of $R$ such that $\Ht \fp \ge 2$. If $\fp$ is a graded ideal, then $R_{\fp}$ is the localization of $R_{\fm}$. Hence $R_{\fp}$ is normal. Next suppose that $\fp$ is not graded. Then we have $\Ht \fp=\Ht \fp^*+1$ and $\depth R_{\fp}=\depth R_{\fp^*}+1$ (see \cite[Theorem 1.5.8 and Theorem 1.5.9]{BrHer93}, respectively). Since $R_{\fp^*}$ is a domain with $\dim R_{\fp^*} \ge 1$, it follows that
$$
\depth R_{\fp}=\depth R_{\fp^*}+1 \ge 1+1=2,
$$
hence $(S_2)$ is satisfied, as required.

So it remains to check $(R_1)$. Take a prime ideal $\fp$ of $R$ with $\Ht \fp=1$. If $\fp$ is graded, then $R_{\fp}$ is the localization of $R_{\fm}$ and so it is a discrete valuation domain. So suppose that $\fp$ is not graded. Since $\Ht \fp=1$ and $\Ht \fp=\Ht \fp^*+1$ and $R$ is a domain, we find that $\fp^*=(0)$. Hence $\fp$ does not contain nonzero homogeneous elements. In other words, every nonzero homogeneous element of $R$ is contained in $R \setminus \fp$. Let $R_{(0)}$ be the homogeneous localization of $R$ with respect to $R \setminus \{0\}$. Then we have $R_{(0)} \subset R_{\fp}$. Now \cite[Lemma 1.5.7]{BrHer93} shows that $R_{(0)}=K[t,t^{-1}]$ for some homogeneous element $t \in R$ of positive degree. Let $\mathcal{K}$ be the field of fractions of $R$. Then the field of fractions of $K[t]$ is also $\mathcal{K}$. Hence we get
$$
K[t] \subset R_{\fp} \subset \mathcal{K}.
$$
Since $K[t]$ is a principal ideal domain, it follows from Lemma \ref{PID} that $R_{\fp}$ is the localization of $K[t]$. Hence $R_{\fp}$ is an integrally closed domain of dimension one. This completes the proof of the proposition.
\end{proof}

\begin{remark}
The above proof shows that if $\fp$ is a prime ideal of height one which is not graded in a graded domain $R=\bigoplus_{n \in \mathbb{Z}}R_n$, then the localization $R_{\fp}$ is automatically regular of dimension one.
\end{remark}

Let us recall Flenner's local Bertini theorem \cite[(4.2) Korollar]{Fl77}.

\begin{theorem}[Flenner]
\label{Fl1}
Let $(R,\fm,K)$ be an excellent local ring such that $R$ is an $K$-algebra, where $K$ is an algebraically closed field. Let $x_1,\ldots,x_n \in R$ be a set of generators of the maximal ideal $\fm$ and let $U$ be a non-empty Zariski open subset of $U(x_1,\dots,x_n) \subset \Spec R$. Assume that $R_{\fp}$ has Serre's conditions $(R_{n_1})$ and $(S_{n_2})$ for $\fp \in U$ and a fixed pair of non-negative integers $(n_1,n_2)$. Let us put
$$
\mathbf{x}_{\alpha}:=\sum_{i=1}^n \alpha_i x_i
$$
for $\alpha=(\alpha_1,\ldots,\alpha_n) \in K^n$. Then there exists a Zariski-dense open subset $\mathcal{U} \subset K^n$ such that for every $\alpha \in \mathcal{U}$, the quotient $R_{\fp}/\mathbf{x}_{\alpha}R_{\fp}$ has also $(R_{n_1})$ and $(S_{n_2})$ for all $\fp \in U$.
\end{theorem}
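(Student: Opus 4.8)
The plan is to reproduce the structure of Flenner's argument, which runs in two stages: one first proves the conclusion for the \emph{generic} hyperplane section, i.e.\ over the rational function field $K(T_1,\dots,T_n)$, and then one spreads out to a Zariski-dense open subset of $K^n$. For the first stage I would introduce indeterminates $T=(T_1,\dots,T_n)$, pass to the excellent local ring $R_\eta$ obtained by localizing $R\otimes_K K(T)$ at the maximal ideal generated by $\fm$ (its residue field is $K(T)$), set $\mathbf{x}_T:=\sum_{i=1}^n T_i x_i\in R_\eta$, and let $U_\eta\subset\Spec R_\eta$ be the preimage of $U$; the aim is that $(R_\eta/\mathbf{x}_T R_\eta)_{\fq}$ enjoys $(R_{n_1})$ and $(S_{n_2})$ for every $\fq\in U_\eta$. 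Two features make this work. First, since $x_1,\dots,x_n$ generate $\fm$, the linear system $\{\,\mathbf{x}_\alpha\,\}_{\alpha}$ has base locus inside $\V(x_1,\dots,x_n)$, which is disjoint from $U(x_1,\dots,x_n)\supseteq U$; hence $\mathbf{x}_T$ is a nonzerodivisor on $R_{\fp}$ for all $\fp\in U$, and the generic member cuts each of the finitely many relevant closed ``bad'' loci---the non-$(R_{n_1})$ locus and the non-$(S_{n_2+1})$ locus, closed because $R$ is excellent---in the expected codimension. Second, since $K$ is algebraically closed, Cohen's structure theorem supplies coefficient fields, so after completing one presents the relevant local rings as quotients of a power series ring in which the $x_i$ map to a minimal generating set of the maximal ideal; a generic $K(T)$-combination of those generators is then a coordinate up to a generic continuous automorphism, which reduces $(R_{n_1})$ for the section to a codimension bound on the non-regular locus together with the transversality condition ``$\mathbf{x}_\alpha$ not in the square of the relevant prime'', a codimension-one linear condition on $\alpha$.

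For the second stage it remains to descend from the generic point $\eta$ of $\mathbb{A}^n_K$ to its $K$-rational points. I would form the total space $Y:=\Spec\bigl(R[T]/(\textstyle\sum_{i=1}^n T_i x_i)\bigr)$ with its projection $\pi\colon Y\to\mathbb{A}^n_K$, so that the fiber over $\alpha\in K^n$ is $\Spec(R/\mathbf{x}_\alpha R)$ and the fiber over $\eta$ is $\Spec(R_\eta/\mathbf{x}_T R_\eta)$, and let $\widetilde{U}\subseteq Y$ be the pullback of $U$, with fiber $\widetilde{U}_v$ over each $v$ the relevant open subset of $Y_v$. Since $R$ is excellent the non-$(R_{n_1})$ and non-$(S_{n_2})$ loci in the fibers of $\pi$ are closed and vary constructibly with $v$, so the set of $v\in\mathbb{A}^n_K$ for which $Y_v\cap\widetilde{U}_v$ satisfies both conditions is a constructible subset of $\mathbb{A}^n_K$; by the first stage it contains $\eta$, hence it contains a dense open. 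Intersecting this with the dense open locus on which $\mathbf{x}_\alpha$ stays a nonzerodivisor on every $R_{\fp}$, $\fp\in U$, and passing to $K$-rational points, yields the required $\mathcal{U}\subset K^n$.

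I expect the first stage---and within it the propagation of $(S_{n_2})$---to be the main obstacle. Cutting by an arbitrary nonzerodivisor only carries $(S_{n_2})$ down to $(S_{n_2-1})$, so the real point is that a \emph{generic} member of a base-point-free system does strictly better: the non-$(S_{n_2})$ locus of the section can meet $U_\eta$ only at primes where the depth of $R_\eta$ already exceeds the naive bound by one, because the failure of $(S_{n_2+1})$ for $R$ is supported on a proper closed set that the generic section reaches only in the right, deeper codimension. Establishing this---whether by tracking local cohomology along the regular element $\mathbf{x}_T$ or through the explicit power series presentation---is the technical core of \cite{Fl77}, and it is exactly here that the hypotheses that $x_1,\dots,x_n$ generate $\fm$ and that $K$ is algebraically closed are used in an essential way.
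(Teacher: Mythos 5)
The paper does not actually prove this statement: it is imported as Flenner's local Bertini theorem and cited as \cite[(4.2) Korollar]{Fl77}, so there is no internal proof to compare yours against. Judged on its own terms, your proposal correctly identifies the two-stage architecture of Flenner's argument (first the generic section over $K(T_1,\dots,T_n)$, then descent to a Zariski-dense open subset of $K^n$), but it remains a road map rather than a proof. The decisive step of the first stage --- that the \emph{generic} member preserves $(S_{n_2})$ on the nose instead of degrading it to $(S_{n_2-1})$, and likewise preserves $(R_{n_1})$ --- is exactly the part you yourself flag as ``the technical core of \cite{Fl77}'' and then do not carry out. Saying that the generic section meets the non-$(S_{n_2+1})$ locus and the singular locus ``in the right, deeper codimension'' is a restatement of what must be proved, not an argument for it; as written, the sketch defers the theorem to the reference.

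The descent stage also contains an unjustified step. You claim that the good locus in $\mathbb{A}^n_K$ for the projection $\pi\colon Y=\Spec\bigl(R[T]/(\mathbf{x}_T)\bigr)\to\mathbb{A}^n_K$ is constructible because the Serre loci ``vary constructibly'' in the fibres, but $\pi$ is of finite type over $K$ only when $R$ is essentially of finite type over $K$, which is not assumed: $R$ is merely an excellent local $K$-algebra. Hence Chevalley's theorem and the EGA constructibility results for Serre conditions in families do not apply directly, and Flenner's actual descent argument is more delicate for precisely this reason. Two smaller points: algebraic closedness of $K$ is not what produces coefficient fields (any equicharacteristic complete local ring has one); it is needed so that the dense open subset of $\mathbb{A}^n_K$ you eventually produce contains enough $K$-rational points. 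And if $n_2=0$ you cannot assert that $\mathbf{x}_\alpha$ is a nonzerodivisor on every $R_{\fp}$ with $\fp\in U$ merely from $\mathbf{x}_\alpha\notin\fp$; avoiding the associated primes requires a separate (easy, but necessary) genericity argument.
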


To apply the result from the paper \cite{CGM86}, we need the following.

\begin{lemma}
\label{Zariskiopen}
Let $\mathbb{P}$ be a local property of locally Noetherian schemes such that the following condition holds:
\begin{enumerate}
\item[$\bullet$]
For a scheme map of finite type $f:X \to S$, let us set $$\mathbb{P}(f):=\{x \in X~|~X_{f(x)}~\mbox{is geometrically}~\mathbb{P}~\mbox{at}~x\}.
$$
Then $\mathbb{P}(f)$ is constructible in $X$.
\end{enumerate}
Let $f:X \to S$ be a scheme map of finite type such that $S$ is an integral Noetherian scheme. If the generic fiber $X_{\eta}$ is geometrically $\mathbb{P}$, then there exists an open neighborhood $\eta \in U \subset S$ such that the fibers $X_s$ are geometrically $\mathbb{P}$ for all $s \in U$.
\end{lemma}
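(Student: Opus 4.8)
The plan is to carry out a standard spreading-out argument based on Chevalley's theorem on images of constructible sets. Put $Z:=X\setminus\mathbb{P}(f)$, i.e. $Z$ is the set of points $x\in X$ at which the fiber $X_{f(x)}$ is \emph{not} geometrically $\mathbb{P}$. By the assumed property of $\mathbb{P}$, the set $\mathbb{P}(f)$ is constructible in $X$, hence so is $Z$. Since $f$ is of finite type and $S$ is Noetherian, $X$ is also Noetherian, so Chevalley's theorem applies and the image $f(Z)\subset S$ is a constructible subset.

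Next I would translate the hypothesis on the generic fiber into a statement about $f(Z)$. Since $\mathbb{P}$ is a local property, the fiber $X_{\eta}$ is geometrically $\mathbb{P}$ if and only if it is geometrically $\mathbb{P}$ at each of its points, i.e. $X_{\eta}\subset\mathbb{P}(f)$, equivalently $X_{\eta}\cap Z=\emptyset$. As $X_{\eta}=f^{-1}(\eta)$, this says exactly that $\eta\notin f(Z)$.

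Now I would invoke the following elementary fact: a constructible subset $C$ of the irreducible scheme $S$ that does not contain the generic point $\eta$ has closure a proper closed subset of $S$. Indeed, write $C=\bigcup_{i=1}^{r}(U_i\cap F_i)$ with $U_i$ open and $F_i$ closed; if $\overline{C}=S$ then $S=\bigcup_i\overline{U_i\cap F_i}\subset\bigcup_i F_i$, so by irreducibility $S=F_{i_0}$ for some $i_0$ with $U_{i_0}\neq\emptyset$, whence $U_{i_0}\subset C$ and in particular $\eta\in C$, a contradiction. Applying this to $C=f(Z)$, the set $\overline{f(Z)}$ is a proper closed subset of $S$, so $U:=S\setminus\overline{f(Z)}$ is a non-empty open neighbourhood of $\eta$.

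Finally, for $s\in U$ we have $s\notin f(Z)$, hence $X_s=f^{-1}(s)\subset X\setminus Z=\mathbb{P}(f)$; thus $X_s$ is geometrically $\mathbb{P}$ at every one of its points, and therefore $X_s$ is geometrically $\mathbb{P}$. This produces the required open set $U$. The argument is essentially formal; the only points that need a little care are the equivalence between being geometrically $\mathbb{P}$ and being so at every point of the fiber (which is part of the notion of a local property) and the precise hypotheses under which Chevalley's theorem applies, so I do not anticipate a genuine obstacle beyond this bookkeeping.
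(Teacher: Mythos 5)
Your proposal is correct and follows essentially the same route as the paper: take the complement $X\setminus\mathbb{P}(f)$, use the constructibility hypothesis together with Chevalley's theorem to see its image is a constructible set missing $\eta$, and remove the closure of that image from $S$. The only difference is that you spell out the (true) elementary fact that a constructible subset of an irreducible scheme not containing the generic point has proper closure, which the paper uses implicitly.
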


\begin{proof}
Set $Y:=X \setminus \mathbb{P}(f)$. Then we have by assumption that $Y$ is also constructible in $X$ and disjoint from $X_\eta$. By Chevalley's theorem \cite[Exercise II 3.19]{Har77}, it follows that $f(Y)$ is constructible in $S$ and $\eta \notin f(Y)$. From this, we see that $f(Y)$ is contained in a proper closed subset of $S$ and therefore, the Zariski closure $\overline{f(Y)}$ does not contain $\eta$. To finish the proof, it suffices to set $U:=S \setminus \overline{f(Y)}$.
\end{proof}

We consider the following conditions borrowed from \cite{CGM86}.

\begin{enumerate}
\item[$(A 1)$]
If $f:X \to S$ is a flat morphism of locally Noetherian schemes such that all of fibers of $f$ are regular and $S$ is $\mathbb{P}$, then $X$ is $\mathbb{P}$.

\item[$(A 2)$]
Assume that $f:X \to S$ is a morphism of finite type such that $S$ is an integral Noetherian scheme and $X$ is excellent. If the generic fiber $X_\eta$ is geometrically $\mathbb{P}$, then there is an open subset $\eta \in U \subset S$ such that the fiber $X_s$ is geometrically $\mathbb{P}$ for every $s \in U$.
\end{enumerate}

We are now ready to prove the following theorem.

\begin{proof}
Proof of Main Theorem \ref{maintheoremA}(1): Note that the localization $R_{\fm}$ is an excellent local ring with residue field $K$ and $\mathbf{x}_{\alpha} \in R$ is a homogeneous element of degree $1$. Take $U:=U(x_1,\ldots,x_n) \subset \Spec R_{\fm}$. Then $U$ is the punctured spectrum of $\Spec R_{\fm}$. Since $R$ is a normal domain by assumption, $R_{\fm}$ is a normal domain, and by Theorem \ref{Fl1} and Serre's normality criterion, we find that $(R_{\fm})_{\fp}/\mathbf{x}_{\alpha}(R_{\fm})_{\fp}$ is normal for all $\fp \in \Spec R_{\fm} \setminus \{\fm\}$ and $\alpha \in \mathcal{U}$, a Zariski open subset of $K^n$. Since the depth of $R_{\fm}$ is at least 3, we have that the depth of $R_{\fm}/\mathbf{x}_{\alpha}R_{\fm}$ is at least 2. Combining everything together, it follows that $R_{\fm}/\mathbf{x}_{\alpha}R_{\fm}$ is a normal domain. Now let us check that $R/\mathbf{x}_{\alpha}R$ is a domain. For this, it is enough to show that the localization map $R/\mathbf{x}_{\alpha}R \to R_{\fm}/\mathbf{x}_{\alpha}R_{\fm}$ is injective.
Assume that we have $\overline{x}=0$ for $x\in R/\mathbf{x}_{\alpha}R$, where $\overline{x}$ is the image of $x$ via the map $R/\mathbf{x}_{\alpha}R \to R_{\fm}/\mathbf{x}_{\alpha}R_{\fm}$. Then there exists an element $s \in (R/\mathbf{x}_{\alpha}R) \setminus \overline{\fm}$ such that $s \cdot x=0$, where $\overline{\fm}$ is the image of $\fm$ in $R/\mathbf{x}_{\alpha}R$. On the other hand, every associated prime of $R$ is graded by \cite[Lemma 1.5.6]{BrHer93}, which implies that
$$
\bigcup_{\fp \in \Ass (R/\mathbf{x}_{\alpha}R)} \fp \subset \overline{\fm}.
$$
Then this implies that $s$ is not a zero divisor. Therefore, we must have $x=0$. Finally after applying Proposition \ref{localization}, we find that $R/\mathbf{x}_{\alpha}R$ is a normal domain. 

Proof of Main Theorem \ref{maintheoremA}(2): Let $x_1,\ldots,x_n \in R_1$ be the homogeneous system of parameters as stated in $(2)$.
Since $K[x_1,\ldots,x_n]$ is a polynomial algebra, we have $\Proj(K[x_1,\ldots,x_n]) \cong \mathbb{P}^{n-1}_{K}$. By assumption, $K[x_1,\ldots,x_n] \to R$ is a module-finite map between graded rings, so we have a finite map\footnote{Here, the module-finite condition ensures that $\phi$ is indeed a morphism. In general, the map of graded rings does not necessarily induce a map on the corresponding projective schemes; rather it is defined over some subset.}
$$
\phi:\Proj(R) \to \mathbb{P}^{n-1}_{K}.
$$
Since the characteristic of $K$ is $0$, the map $\phi$ has separably generated residue field extensions. We want to apply \cite[Theorem 1]{CGM86} in the case that $\mathbb{P}=``\mbox{normal}"$. By \cite[9.9.5]{Gr65}, the condition of Lemma \ref{Zariskiopen} for $\mathbb{P}=``\mbox{normal}"$ is satisfied. This, together with \cite[Theorem 23.9]{M86}, implies that the conditions $(A 1)$ and $(A 2)$ hold true. By applying \cite[Theorem 1]{CGM86}, there is a non-empty open subset $U \subset (\mathbb{P}^{n-1}_{K})^*$ (the dual projective space) such that $\phi^{-1}(H)$ is normal with $H \in U$. In other words, if $\mathbf{x}_{\alpha}=0$ is the defining equation of $H$, the projective scheme $\Proj(R/\mathbf{x}_{\alpha}R)$ is normal. Let us put $B:=R/\mathbf{x}_{\alpha}R$ for simplicity. The rest of the proof follows the line as that of Proposition \ref{localization}. Therefore, it suffices to check the Serre's $(R_1)$ and $(S_2)$ for $B$. To this aim, choose a prime ideal $\fp \subset B$ such that $\bigoplus_{n>0}B_n \not\subset \fp$ and define $\fp^*$ as previously. So we have
\begin{equation}
\label{normalpart}
B_{(\fp)}=B_{(\fp^*)}=(B_{(\fp^*)})_0[t,t^{-1}],
\end{equation}
where $B_{(\fp)}$ and $B_{(\fp^*)}$ are the homogeneous localizations and $(B_{(\fp^*)})_0$ is the degree $0$ part. This isomorphism can be checked by noting the fact that $B_{(\fp)}$ possesses a unit element of degree $1$. Indeed, suppose that $B_1 \subset \fp$. Then we have $B_1 \subset \fp^*$. Since $K[B_1] \to B$ is module-finite, it follows that $K=K[B_1]/K[B_1] \cap \fp^* \to B/\fp^*$ is a module-finite extension of graded domains. Hence $\fp^*$ is the graded maximal ideal of $R$ and $\bigoplus_{n>0} B_n \subset \fp^* \subset \fp$, which is a contradiction. Thus, $B_1 \setminus \fp$ is not empty. By the normality of $\Proj(R/\mathbf{x}_{\alpha}R)$, it follows that $(B_{(\fp^*)})_0$ is normal. So by $(\ref{normalpart})$, $B_{(\fp)}$ is also normal. As the usual localization $B_{\fp}$ is the localization of $B_{(\fp)}$, the normality of $B_{\fp}$ follows. Next, take  a prime ideal $\fp \subset B$ such that $\bigoplus_{n>0}B_n \subset \fp$. Then we have $\fp=\fp^*=\bigoplus_{n>0}B_n$ and $\depth B_{\fp}=\depth B_{\fp^*}=\depth R_{\fm}/\mathbf{x}_\alpha R_{\fm} \ge 2$ by assumption. This finishes the proof of the theorem.
\end{proof}

\begin{remark}
\begin{enumerate}
\item
Note that the proof given in Main Theorem \ref{maintheoremA}(2) also works for the case $(1)$ by considering an embedding $\Proj(R) \hookrightarrow \mathbb{P}^{n-1}_{K}$ coming from the surjection $K[X_1,\ldots,X_n] \twoheadrightarrow R$ by letting $X_i \mapsto x_i$. Even when $K$ has prime characteristic, this embedding has trivial residue field extensions, thus being separable.

\item
The existence of homogeneous system of parameters $x_1,\ldots,x_n \in R_1$ as required in Main Theorem \ref{maintheoremA}(2) is ensured by Graded Noether Normalization Theorem under the assumption that the field $K$ is infinite, which is the case in our setting.  
\end{enumerate}
\end{remark}

As a corollary, we obtain the following Grothendieck-Lefschetz type theorem for divisor class groups on graded normal domains that are not necessarily standard graded.

\begin{proof}[Proof of Main Theorem \ref{ClassGroup}]
By Demazure's theorem, there exists a normal connected projective variety $X$ over $K$ together with an ample $\mathbb{Q}$-Cartier $\mathbb{Q}$-divisor $D$ such that $R \cong \bigoplus_{n \ge 0} H^0(X,\mathcal{O}_X(nD))$. Write
\begin{equation}
\label{Demazure}
D=\sum \frac{p_V}{q_V} V,
\end{equation}
where $V$ ranges over irreducible and reduced Weil divisors on $X$ such that  $p_V,q_V \in \mathbb{Z}$, $q_V>0$ and $(p_V,q_V)=1$. Such a presentation is unique. Then by \cite[Theorem 1.6]{W81}, there is an exact sequence
$$
0 \to \mathbb{Z} \xrightarrow{\theta} \Cl(X) \to \Cl(R) \to F \to 0,
$$ 
where $\theta(1):=\ell D$ with $\ell:=\mbox{LCM}\{q_V~|~V\}$ ($V$ as in $(\ref{Demazure})$) and the group $F$ is determined as follows: Consider the map $\alpha:\mathbb{Z} \to \bigoplus_V \mathbb{Z}/q_V\mathbb{Z}$ defined by $\alpha(1)=(\overline{p_V})_V$, where $\overline{p_V}$ denotes the class of $p_V$ modulo $q_V\mathbb{Z}$. Then the group $F$ is defined as the cokernel of $\alpha$.

By Main Theorem \ref{maintheoremA}, there exists a Zariski-dense open subset $\mathcal{U} \subset K^n$ for which $R/\mathbf{x}_{\alpha}R$ is a graded normal domain with $\alpha \in \mathcal{U}$. Write $f:=\mathbf{x}_{\alpha}$ for simplicity. Denote by $H_f$ the Cartier divisor on $X$ defined by $f$. Since we are working over characteristic zero, we want to apply Bertini's theorem for linear systems. By \cite[Corollary 1]{CGM86}, $V \cap H_f$ is reduced for a general member $f \in R_1$ for all $V$ appearing in $D$. On the other hand, $V \cap H_f$ may be chosen to be irreducible for a general member $f \in R_1$ for all $V$ as in $(\ref{Demazure})$ (for example, see \cite{A51} for a proof). For such $H_f$, we have
\begin{equation}
\label{Demazure2}
D \cap H_f=\sum \frac{p_V}{q_V} (V \cap H_f)
\end{equation}
with $V \cap H_f$ being irreducible and reduced. Putting $R(f):=\bigoplus_{n \ge 0} H^0(H_f,\mathcal{O}_{H_f}(n(D \cap H_f)))$, we get the commutative diagram:
$$
\begin{CD}
0 @>>> \mathbb{Z} @>>> \Cl(X) @>>> \Cl(R) @>>> F @>>> 0 \\
@. @| @VVV @VVV @| \\
0 @>>> \mathbb{Z} @>>> \Cl(H_f) @>>> \Cl(R(f)) @>>> F @>>> 0 \\
\end{CD}
$$
where $\Cl(X) \to \Cl(H_f)$ (resp. $\Cl(R) \to \Cl(R(f))$) is induced by $H_f \hookrightarrow X$ (resp. $R \to R(f)$) and the lower exact sequence is obtained by applying \cite[Theorem 1.6]{W81} and the expression $(\ref{Demazure2})$. Now the map $\Cl(R) \to \Cl(R(f))$ factors as $\Cl(R) \to \Cl(R/fR) \to \Cl(R(f))$. Indeed, the composite map of section rings $R \to R/fR \to R(f)$ is induced by taking the $0$-th cohomology long exact sequence coming from the short exact sequence:
$$
0 \to \mathcal{O}_X(-H_f+nD) \to \mathcal{O}_X(nD) \to \mathcal{O}_{H_f}(n(D \cap H_f)) \to 0.
$$
Then it follows from \cite[Theorem 1]{RaSr06}, together with the assumption $\dim R \ge 4$, that the map $\Cl(X) \to \Cl(H_f)$ is injective. So we have that the map $\Cl(R) \to \Cl(R(f))$ stays injective and the same holds for $\Cl(R) \to \Cl(R/fR)$.
\end{proof}

\section{Demazure's construction of non-standard graded rings}

\begin{lemma}
\label{semistandard}
Suppose that $R=\bigoplus_{n \ge 0} R_n$ is a Noetherian graded ring such that there is a standard graded Noetherian ring $A=\bigoplus_{n \ge 0} A_n$ for which there is a module-finite extension of graded rings $A \to R$. Then one can find an integer $N>0$ such that
$$
R^{(d)}=R_0[R_d]
$$
for all $d \ge N$ and $R^{(d)}:=\bigoplus_{n \ge 0} R_{nd}$.
\end{lemma}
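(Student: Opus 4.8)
The plan is to reduce everything to a bound on the degrees in which $R$ is generated as an $R_0$-algebra, and then to exploit that the ``preferred'' generators — those of the standard graded subring $A$ — live in degree $1$, so that their products keep track of degree in the obvious way. As a preliminary normalization I would replace $A$ by its image in $R$ (which is again a standard graded Noetherian ring and over which $R$ is still module-finite, the $A$-action factoring through the image), so that we may assume $A\subseteq R$ is a graded subring; in particular $A_1\subseteq R_1$. Next, choose homogeneous elements $y_1,\dots,y_s$ generating $R$ as an $A$-module, of degrees $e_1,\dots,e_s$, and set $N:=\max\{e_1,\dots,e_s,1\}$. I claim this $N$ works.

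The first step is the identity $R_m=R_1\cdot R_{m-1}$ for every $m>N$. Indeed, any homogeneous $\xi\in R_m$ can be written as $\xi=\sum_i a_iy_i$ with $a_i\in A$ homogeneous of degree $m-e_i\ge m-N\ge 1$; since $A$ is standard graded we have $A_j=A_1\cdot A_{j-1}$ for all $j\ge 1$, hence $a_iy_i\in A_1\cdot\bigl(A_{m-e_i-1}y_i\bigr)\subseteq A_1\cdot R_{m-1}\subseteq R_1\cdot R_{m-1}$, and summing over $i$ gives $R_m\subseteq R_1\cdot R_{m-1}$; the reverse inclusion is trivial. Iterating this identity downward from degree $m$ to degree $N$ (the last application, at degree $N+1$, is legitimate since $N+1>N$) yields $R_m=R_1^{\,m-N}R_N$ for all $m\ge N$; in particular $R_d=R_1^{\,d-N}R_N$ whenever $d\ge N$.

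The second step is pure bookkeeping. Fix $d\ge N$ and $n\ge 1$; then $nd\ge N$, so $R_{nd}=R_1^{\,nd-N}R_N$. Writing the exponent as $nd-N=(n-1)d+(d-N)$ with both summands $\ge 0$, we get
\[
R_{nd}=R_1^{\,(n-1)d}\bigl(R_1^{\,d-N}R_N\bigr)=R_1^{\,(n-1)d}R_d=\bigl(R_1^{\,d}\bigr)^{n-1}R_d\subseteq (R_d)^{n-1}R_d=(R_d)^{n},
\]
using $R_1^{\,d}\subseteq R_d$ and $R_d=R_1^{\,d-N}R_N$. The reverse inclusion $(R_d)^n\subseteq R_{nd}$ is obvious, so $R_{nd}=(R_d)^n=R_0(R_d)^n$ for every $n\ge 1$; since the degree-$0$ parts of $R^{(d)}$ and of $R_0[R_d]$ both equal $R_0$, this is precisely the assertion $R^{(d)}=R_0[R_d]$ for all $d\ge N$.

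Finally, a word on where the content lies. There is no serious obstacle once the setup above is in place: the whole point is to obtain a bound $N$ on the generating degrees of $R$ that is independent of $d$, and this is exactly what a module-finite extension of a \emph{standard} graded ring provides. Without such a hypothesis one only gets that $R^{(d)}$ is standard graded for $d$ lying in a suitable arithmetic progression, not for all large $d$: for instance $R=K[x,y]$ with $\deg x=1$, $\deg y=2$ has $R^{(2)}$ standard graded but $R^{(3)}$ not, and this $R$ is not module-finite over any standard graded subring. The one place the hypothesis is genuinely used is the inclusion $A_1\subseteq R_1$ (equivalently $R_1^{\,d}\subseteq R_d$), which is what makes the powers $R_1^{\,(n-1)d}=(R_1^{\,d})^{n-1}$ interact correctly with the Veronese piece $R_d$.
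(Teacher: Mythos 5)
Your proof is correct and follows essentially the same route as the paper's: both arguments bound $N$ by the degrees of a homogeneous module-generating set over $A$, use standard gradedness of $A$ to peel off a degree-one factor from each graded piece above $N$ (the paper phrases this as $R_{n+k}=A_kR_n$ and hence $R_nR_k=R_{n+k}$ for $n\ge N$, you as $R_m=R_1^{\,m-N}R_N$), and then conclude $R_{nd}=(R_d)^n$ for $d\ge N$. Your preliminary reduction to $A\subseteq R$ and the closing counterexample are welcome clarifications but do not change the substance of the argument.
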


\begin{proof}
Let $m_1,\ldots,m_r \in R$ be the set of generators of $R$ as an $A$-module. One may assume  all $m_i$ to be homogeneous elements. Set
$$
N:=\max \{\deg(m_1),\ldots,\deg(m_r)\}.
$$
Let $n \ge N$ and $k>0$ and pick an element $x \in S_{n+k}$. Then one can write
$$
x=\sum_{i=1}^r a_i m_i~\mbox{and}~a_i \in A,
$$
where $a_i \in A$ is a homogeneous element of degree equal to $n+k-\deg(m_i)$. As $A$ is standard graded, one can write $a_i=\sum_{i=1}^s b_i c_i$ for $b_i \in A_{n+k-\deg(m_i)-1}$ and $c_i \in A_1$ by the standard assumption on the graded ring $A$.
Summarizing everything, we obtain $x \in A_1 R_{n+k-1}$. Applying the same discussion to $R_{n+k-1}$ inductively, it follows that
$$
R_{n+k}=A_k R_n.
$$

By the definition of graded rings, we have $R_n R_k \subset R_{n+k}=A_k R_n \subset R_k R_n$ and hence $R_n R_k=R_{n+k}$ for $n \ge N$ and $k>0$. Using this, it is easy to see that $R^{(d)}=R_0[R_d]$ for $d \ge N$.
\end{proof}

Using this lemma, we get

\begin{proof}[Proof of Main Theorem \ref{Non-StandardDemazure}]
Since $R_0[R_1]$ is evidently standard graded, the graded ring $R$ satisfies the hypothesis of Lemma \ref{semistandard}. Thus, $R$ satisfies ``condition $(\#)$" in \cite[page 206]{GW78}. By \cite[Lemma (5.1.2)]{GW78}, the sheaf $\mathcal{O}_X(1):=\widetilde{R(1)}$ is invertible and ample on $X$. Let $D$ be the corresponding ample Cartier divisor. Set $\fm:=\bigoplus_{n>0} R_n$ and $R(X,D):=\bigoplus_{n \ge 0} H^0(X,\mathcal{O}_X(nD))$. Denote by $H_{\fm}^i(R)$ the $i$-th local cohomology supported at $\fm$. Under the condition $(\#)$, there is an exact sequence:
$$
0 \to H_{\fm}^0(R) \to R \xrightarrow{\theta} R(X,D) \to H_{\fm}^1(R) \to 0
$$
in view of \cite[(5.1.6)]{GW78}. Since $R$ is a normal domain of dimension $\ge 2$, it follows that both $H_{\fm}^0(R)$ and $H_{\fm}^1(R)$ vanish by Serre's normality criterion. So we have an isomorphism $R \cong R(X,D)$ as graded $R$-modules. By construction of the map $\theta$, $R \cong R(X,D)$ is a ring isomorphism.
\end{proof}

\begin{acknowledgement}
The author is grateful to Professor L. Katth\"{a}n and and Professor K. Yanagawa for sharing illuminating discussions.
\end{acknowledgement}

\end{document}